\newtheorem{thm}{Theorem}[section] \newtheorem{pro}[thm]{Proposition}
\newtheorem{lemma}[thm]{Lemma}
\numberwithin{equation}{section}
\theoremstyle{remark}
\theoremstyle{definition} 
\newtheorem{rmk}[thm]{Remark}
\DeclareMathAlphabet{\mathpzc}{OT1}{pzc}{m}{it}
 \DeclareMathOperator*{\QF}{QF}
\DeclareMathOperator*{\Gal}{Gal}
\DeclareMathOperator*{\Char}{char}
 \newcommand{\NN}{\mathbb{N}}
 \newcommand{\Aff}{\mathbb{A}}
\newcommand{\PP}{\mathbb{P}}
\newcommand{\cO}{\mathcal{O}}
\newcommand{\Z}{\mathbb{Z}}
\begin{document}
\title{Galois closure of henselization} 
 \author{
   Manish Kumar
  }
  \address{
Statistics and Mathematics Unit\\
Indian Statistical Institute, \\
Bangalore, India-560059
  }
  \email{manish@isibang.ac.in}
 \begin{abstract}
  It is shown that the Galois closure of the henselization of a one dimensional local field arising in geometric and arithmetic situation is separably closed.
 \end{abstract}
\maketitle

\section{Introduction}

Let $k$ be an algebraically closed field and $C$ be a regular $k$-curve. Let $\tau$ be a closed point of $C$ and $R=\cO_{C,\tau}$. Let $R^h$ denote the henselization of $R$ and $\hat R$ the completion of the DVR $R$. Recall that $R^h$ is the integral closure of $R$ in $\hat R$. Let $K$ be the function field $k(C)$ and $K^h=\QF(R^h)$ be the fraction field of $R^h$. One of the main result proved here is the following:

\begin{thm} \label{mainthm}
The Galois closure of $K^h/K$ is the separable closure of $k(C)$. 
\end{thm}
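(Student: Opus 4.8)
The plan is to translate the statement into a property of the absolute Galois group $G:=\Gal(K^{\mathrm{sep}}/K)$ and then exploit the known freeness of $G$. First I would recall the standard description of henselization: fixing an embedding $K^{\mathrm{sep}}\hookrightarrow\overline{\QF(\hat R)}$, let $\bar\tau$ be the induced place of $K^{\mathrm{sep}}$ above $\tau$ and $D=D_{\bar\tau}\subseteq G$ its decomposition group. Then $K^h=(K^{\mathrm{sep}})^{D}$ (equivalently $K^h=K^{\mathrm{sep}}\cap\QF(\hat R)$, the separable‑algebraic part of the completed local field). Since the residue field of $R$ is the algebraically closed field $k$, there is no residue extension, so $D$ equals the inertia group $I:=I_{\bar\tau}$. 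By ordinary Galois theory the Galois closure of $K^h/K$ inside $K^{\mathrm{sep}}$ is the fixed field of the normal core $\operatorname{core}_G(D)=\bigcap_{g\in G}gDg^{-1}$. Hence Theorem~\ref{mainthm} is equivalent to
\[
\operatorname{core}_G(I_{\bar\tau})=1 ,
\]
i.e. no nontrivial element of $G$ fixes every place of $K^{\mathrm{sep}}$ lying above $\tau$.

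Next I would bring in two structural facts. The first: $G$ is a free profinite group of infinite rank. For $K=k(t)$ this is the theorem of Harbater and Pop (the positive–characteristic case being the substantive one), and for $K=k(C)$ one picks a finite map $C\to\PP^1$, exhibiting $G$ as an open subgroup of $\Gal(k(t)^{\mathrm{sep}}/k(t))$, which is therefore again free profinite of the same (infinite) rank. The second: $I\cong\Gal(\QF(\hat R)^{\mathrm{sep}}/\QF(\hat R))$ is prosolvable, because $\QF(\hat R)$ is a complete discretely valued field with algebraically closed residue field, so each finite Galois subextension is built from a tame part — cyclic, as $k$ contains the prime‑to‑$p$ roots of unity — and a wild part, which is a $p$‑group; thus every finite quotient of $I$ is solvable. (The same holds in the mixed–characteristic "arithmetic" variant hinted at in the abstract.)

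Now the core of the argument. If $I$ had finite index in $G$ it would be an open subgroup of a free profinite group of infinite rank, hence itself free profinite of infinite rank, hence not prosolvable — contradicting the previous paragraph. So $[G:I]=\infty$, and therefore $N:=\operatorname{core}_G(I)$ is a closed normal subgroup of $G$ of infinite index. If $N\neq 1$, then by the theory of normal subgroups of free profinite groups (Mel'nikov's theorem), $N$ is itself free profinite of infinite rank, in particular not prosolvable; but $N\subseteq I$, which is prosolvable, so $N$ is prosolvable. This contradiction forces $N=1$, i.e. $\operatorname{core}_G(I_{\bar\tau})=1$, which is what we wanted.

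The main external ingredient — the point where geometry genuinely enters and cannot be bypassed — is the freeness of $\Gal(k(t)^{\mathrm{sep}}/k(t))$ in positive characteristic (Harbater and Pop, via Riemann's existence theorem together with formal/rigid patching). Beyond that, the points requiring care are: (i) the identification $K^h=(K^{\mathrm{sep}})^{D_{\bar\tau}}$, i.e. that a henselian valued field and its completion have canonically isomorphic absolute Galois groups; (ii) invoking Mel'nikov's theorem with the correct hypotheses (a nontrivial closed normal subgroup of infinite index in an infinite‑rank free profinite group is free of that rank); and (iii) in the arithmetic case, supplying the correct analogues of "free of infinite rank" and "prosolvable", since there the residue field need not be algebraically closed.
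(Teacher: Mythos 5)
Your reduction of the theorem to the group-theoretic statement $\bigcap_{g\in G}gIg^{-1}=\{e\}$, where $G=\Gal(K^{\mathrm{sep}}/K)$ and $I=D$ is the inertia (equal to decomposition) group of a place of $K^{\mathrm{sep}}$ over $\tau$, is correct, and your route is genuinely different from the paper's. The paper never invokes freeness of the absolute Galois group: it reduces to finite extensions (Lemma \ref{red.to.fin.extn}), realizes a given finite Galois extension $\hat L/k((x))$ by a Harbater--Katz--Gabber cover $X\to\PP^1_x$, and then solves a single explicit split embedding problem with kernel $(\Z/l\Z)^I$ (Harbater's ``adding branch points''), so that by Lemma \ref{groupconstruction} the resulting $\Z/l\Z\wr I$-cover has trivial core over its inertia subgroup. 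Your argument buys uniformity --- one proof for all characteristics, no explicit covers, no separate treatment of $\Char(k)=0$ --- at the price of the full Harbater--Pop theorem that $\Gal(k(C)^{\mathrm{sep}}/k(C))$ is free profinite of rank $\card(k)$, a far deeper input than anything the paper uses. The identifications $K^h=(K^{\mathrm{sep}})^{D}$, $D=I$, and the prosolvability of $I$ are all fine.

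The step you must repair is the appeal to ``Mel'nikov's theorem'' in the form ``a nontrivial closed normal subgroup of infinite index in an infinite-rank free profinite group is free of that rank.'' The standard formulations characterize the closed normal subgroups $N$ that are free of rank $m$ as those that are not pro-$p$ for any prime $p$; in the excluded case $N$ is free pro-$p$, hence prosolvable, and your contradiction with $N\subseteq I$ evaporates exactly there. Since the subgroup you are trying to kill lies inside the prosolvable group $I$, this is the one case you cannot afford to leave open. The clean fix is to prove directly that an infinite-rank free profinite group has no nontrivial prosolvable closed normal subgroup: given $e\ne n\in N$, choose a finite quotient $\pi\colon G\onto A$ with $\pi(n)\ne e$, use freeness to solve properly the embedding problem for $S\wr A\onto A$ with $S$ a nonabelian finite simple group, and observe that the image of $N$ would then be a nontrivial solvable normal subgroup of $S\wr A$, a group whose solvable radical is trivial. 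With that lemma in hand your argument closes (and the preliminary step showing $[G:I]=\infty$ becomes unnecessary). Note that this fix is the abstract shadow of the paper's wreath-product Lemma \ref{groupconstruction}, so the two proofs are closer than they first appear: what genuinely separates them is that the paper solves its one embedding problem geometrically, with control of the inertia group, whereas you let the freeness theorem solve all embedding problems for you.
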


The proof when the base field $k$ has characteristic 0 is fairly simple. In the case when $\Char(k)=p>0$, the proof uses the existence of Harbater-Katz-Gabber covers. Recall that given any $I$-Galois extension $L/k((x^{-1}))$ it extends uniquely to an $I$-cover $Y\to\PP^1_x$ which is totally ramified at most at $x=0$ and $x=\infty$ such that the cover is tamely ramified at $x=0$ and the local field extension at $x=\infty$ agrees with the extension $L/k((x^{-1}))$ [\cite{KatzGabber}, \cite{harbater-adding.branch.points}]. Such a cover will be called the Harbater-Katz-Gabber cover associated to $L/k((x^{-1}))$.

An arithmetic analogue of the above result is also proved in section \ref{arithmetic-case}. More precisely, it is shown that the Galois closure of the strict henselization of a number field at a prime is algebraically closed. Finally in section \ref{HKG} an analogue of the existence of Harbater-Katz-Gabber cover in the number field case is explored.

\section{Lemmas}

We begin with the following lemma which reduces the problem involving infinite extensions of fields to certain statements about finite extensions of fields.
\begin{lemma}\label{red.to.fin.extn}
Let $K\subset M$ be fields. Let $\bar M$ be the separable closure of $M$. Let $K^a$ and $\bar K$ be the separable closure of $K$ in $M$ and $\bar M$ respectively. 
Given a finite separable extension $L_1/M$ with $L_1\subset \bar M$, suppose there exists a finite separable extension $L/K$ with Galois closure $\tilde L$ such that $L\subset M$ and the compositum $\tilde L M$ contains $L_1$. Then the Galois closure of $K^a/K$ is $\bar K$.
\end{lemma}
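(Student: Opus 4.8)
The plan is to set $N$ to be the Galois closure of $K^a/K$ inside $\bar K$ (hence inside $\bar M$) and to prove $N=\bar K$. Several routine observations should be recorded first: $K^a=\bar K\cap M$ essentially by definition; $\bar K$ is separably closed because a tower of separable algebraic extensions is again separable, so $\bar K/K$ is Galois; $N/K$ is Galois by construction; and $N\cap M=K^a$, since $N\subseteq\bar K$ forces $N\cap M\subseteq\bar K\cap M=K^a$, while $K^a\subseteq N\cap M$ trivially.

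The first real step is to show $\bar K\subseteq NM$. Let $\alpha\in\bar K$. Being separable algebraic over $K$, it is separable algebraic over $M$, so $L_1:=M(\alpha)$ is a finite separable extension of $M$ contained in $\bar M$. Applying the hypothesis to this $L_1$ yields a finite separable $L/K$ with $L\subseteq M$ and Galois closure $\tilde L$ such that $\alpha\in L_1\subseteq \tilde L M$. Since $L$ is a finite separable extension of $K$ lying in $M$, we have $L\subseteq K^a$; and since $N/K$ is Galois with $K^a\subseteq N$, the field $N$ contains the Galois closure $\tilde L$ of $L/K$. Hence $\alpha\in\tilde L M\subseteq NM$. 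As $\alpha$ was arbitrary, $\bar K\subseteq NM$, and because $N\subseteq\bar K$ this yields $\bar KM=NM$.

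The last step is to recover $N=\bar K$ from $\bar KM=NM$. By the theorem on natural irrationalities, for a Galois extension $E/K$ sitting inside $\bar M$, restriction gives an isomorphism $\Gal(EM/M)\xrightarrow{\ \sim\ }\Gal(E/E\cap M)$. Applying this to $E=\bar K$ and to $E=N$ (both Galois over $K$, with $\bar K\cap M=N\cap M=K^a$) and using $\bar KM=NM$, one obtains a commutative square which shows that the restriction map $\Gal(\bar K/K^a)\to\Gal(N/K^a)$ is an isomorphism. Its kernel is $\Gal(\bar K/N)$, which is therefore trivial, and since $\bar K/N$ is Galois this gives $N=\bar K$, i.e.\ the Galois closure of $K^a/K$ is $\bar K$.

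I expect the crux to be this final extraction: the hypothesis only controls finite extensions of $K$ up to compositum with $M$, so one needs natural irrationalities precisely in order to pass from the easily obtained inclusion $\bar K\subseteq NM$ back to the desired equality $N=\bar K$. The preliminary verifications that the relevant extensions are Galois and that $N\cap M=\bar K\cap M=K^a$ are elementary, but they must be in hand for that argument to run.
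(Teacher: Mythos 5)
Your proof is correct. The first half coincides with the paper's argument: both take $\alpha\in\bar K$, apply the hypothesis to $L_1=M(\alpha)$, and use $L\subset K^a$ to place $\tilde L$ inside the Galois closure $N$ of $K^a/K$, obtaining $\alpha\in\tilde LM\subseteq NM$. Where you diverge is in the descent from ``containment after compositing with $M$'' back to containment in $N$ itself. The paper does this pointwise at the finite level: it asserts that the separable closure of $K$ in $\tilde LM$ is $\tilde LK^a$, so that $\alpha\in\tilde LK^a\subseteq N$ immediately; that assertion is stated without proof, and justifying it requires exactly the linear-disjointness/natural-irrationalities input you invoke. You instead perform the descent once, globally, by comparing $\Gal(\bar KM/M)\cong\Gal(\bar K/K^a)$ with $\Gal(NM/M)\cong\Gal(N/K^a)$ via the equalities $\bar KM=NM$ and $\bar K\cap M=N\cap M=K^a$, and concluding $\Gal(\bar K/N)=\{e\}$. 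The trade-off: the paper's route is shorter and stays with finite extensions but leaves its key field-theoretic claim unproved, while yours is slightly longer and needs natural irrationalities for infinite Galois extensions (restriction $\Gal(EM/M)\to\Gal(E/E\cap M)$ being a topological isomorphism, with surjectivity coming from closedness of the image), but every step is standard and fully justified. Both are valid proofs of the lemma.
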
 

\begin{proof}
 Since $\bar M$ is separably closed, so is $\bar K$. Since the Galois closure of $K^a/K$ is contained in $\bar K$, it is enough to show that $\bar K$ is contained in the Galois closure of $K^a/K$. Let $\alpha\in\bar K$ then $M(\alpha)/M$ is a finite separable extension. So by the hypothesis, there exists a finite separable extension $L/K$ with Galois closure $\tilde L$ such that $L\subset M$ and $\tilde LM$ contains $\alpha$. But $\alpha$ is separable over $K$ and separable closure of $K$ in $\tilde LM$ is $\tilde LK^a$, so $\alpha \in \tilde LK^a$. But $\tilde L K^a$ is obviously contained in the Galois closure of $K^a/K$, since $L\subset K^a$. 
\end{proof}

Let $I$ be a finite group and $l$ be positive integer coprime to $|I|$. Let $G$ be the wreath product $\Z/l\Z \wr I=(\Z/l\Z)^I\rtimes I$ and $q:G\to I$ the canonical projection.

\begin{lemma}\label{groupconstruction}
 Let $I_1\le G$ be a subgroup such that $I_1$ is isomorphic to $I$. Then $\cap_{g\in G}gI_1g^{-1}=\{e\}$. In other words, the maximal subgroup of $I_1$ which is also a normal subgroup of $G$ is $\{e\}$.
\end{lemma}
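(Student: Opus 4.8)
The statement asserts that the \emph{core} of $I_1$ in $G$ --- the largest normal subgroup of $G$ contained in $I_1$ --- is trivial, and my plan is to prove this by pitting $I_1$ against the normal abelian subgroup $N:=(\Z/l\Z)^I=\ker q$. Write $K:=\bigcap_{g\in G}gI_1g^{-1}$, which is normal in $G$ and contained in $I_1$.

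First I would record the numerical point coming from the coprimality hypothesis: $|I_1|=|I|$ is coprime to $|N|=l^{|I|}$, so by Lagrange's theorem $I_1\cap N=\{e\}$, and a fortiori $K\cap N=\{e\}$. (It follows in passing that $q|_{I_1}$ is an isomorphism, so that $I_1$ is a complement of $N$ in $G$, but this is not strictly needed below.) Next comes the main step, a commutator argument: since $K$ and $N$ are both normal in $G$ and intersect trivially, for $a\in K$ and $b\in N$ one has $[a,b]=(a^{-1}b^{-1}a)b\in N$ and simultaneously $[a,b]=a^{-1}(b^{-1}ab)\in K$, so $[a,b]\in K\cap N=\{e\}$; hence $K$ centralizes $N$, i.e.\ $K\subseteq C_G(N)$. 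Finally I would compute $C_G(N)$. Clearly $N\subseteq C_G(N)$ since $N$ is abelian. Conversely, an element $x=(a,i)$ with $i\in I$ acts on $N$ by conjugation exactly through the automorphism of $N=\bigoplus_{j\in I}\Z/l\Z$ induced by the permutation of the index set $I$ given by left translation by $i$; since this permutation action (and hence, as $l\ge 2$, the induced action on $N$) is faithful, $x\in C_G(N)$ forces $i=e$, i.e.\ $x\in N$. Thus $C_G(N)=N$, and combining with $K\cap N=\{e\}$ we get $K\subseteq N$ and therefore $K=\{e\}$, as desired.

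The argument is essentially routine once the three structural inputs about the wreath product are in place --- $N$ abelian and normal, $I$ acting faithfully on it, and $C_G(N)=N$ --- so the only real care needed is in those verifications, in particular the conjugation formula $(a,i)(c,e)(a,i)^{-1}=(i\cdot c,e)$ in $N\rtimes I$. As an alternative route one could invoke the conjugacy part of the Schur--Zassenhaus theorem to replace $I_1$ by the standard complement $\{(0,i):i\in I\}$ --- legitimate because the core is a normal, hence conjugation-invariant, subgroup of $G$ --- and then check directly that conjugating a nontrivial $(0,i)$ by suitable elements of $N$ throws it outside $\{(0,i):i\in I\}$; but the commutator argument above sidesteps even this.
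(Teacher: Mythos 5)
Your proof is correct, and it takes a genuinely different route from the paper's. The paper argues by direct element-pushing: it first notes $q(I_1)=I$ by coprimality, then supposes $(f,i)$ with $i\neq e$ lies in the core, conjugates the corresponding element $(f_0,i_0)\in I_1$ by $(f_1,i_1)$ for a fixed $i_1\neq e$ and all $f_1\in(\Z/l\Z)^I$, and observes that the result $(f_1+i_1\cdot f_0-i\cdot f_1,\,i)$ cannot be independent of $f_1$. You instead run a structural argument: the core $K$ is normal and meets $N=(\Z/l\Z)^I$ trivially by Lagrange, two normal subgroups with trivial intersection centralize each other via the standard commutator computation $[K,N]\subseteq K\cap N$, and $C_G(N)=N$ because the regular permutation action of $I$ on the coordinates of $N$ is faithful; hence $K\subseteq N$ and so $K=\{e\}$. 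Both arguments ultimately rest on the same fact --- that a nontrivial $i\in I$ acts nontrivially on $N$ --- but yours buys something extra: it never uses that $I_1\cong I$, only that $|I_1|$ is coprime to $l$, so it proves the triviality of the core for any subgroup meeting $N$ trivially, and it avoids the slightly awkward bookkeeping in the paper (where the displayed identity should really be stated as $(f_1,i_1)^{-1}(f,i)(f_1,i_1)=(f_0,i_0)$, justified by the injectivity of $q|_{I_1}$). The paper's version is more self-contained at the level of a single explicit computation. One point you correctly flag in passing deserves emphasis: the faithfulness of the action on $N$, and hence the lemma itself, requires $l\geq 2$; the paper's hypothesis ``$l$ a positive integer coprime to $|I|$'' literally admits $l=1$, for which the statement is false whenever $I\neq\{e\}$ (then $G=I_1=G$ has nontrivial core). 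This does not affect the applications, where one may always choose $l\geq 2$ coprime to the relevant order.
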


\begin{proof}
 If $I=\{e\}$ then there is nothing to prove. The normal subgroup $(\Z/l\Z)^I$ has order prime to $|I|$. Hence $q(I_1)=I$. Let $(f,i)\in \cap_{g\in G}gI_1g^{-1}$ for some $i\ne e$. Let $i_1\in I$ be different from $e$, then there exists a unique $e \ne i_0\in I$ such that $i=i_1i_0i_1^{-1}$. Moreover, $q|_{I_1}$ is an isomorphism onto $I$ implies that there exists $f_0\in (Z/l\Z)^I$ such that $(f_0,i_0)\in I_1$. By assumption $(f,i)\in \cap_{f_1\in (\Z/l\Z)^I}(f_1,i_1)I_1(f_1,i_1)^{-1}$. So for all $f_1\in (\Z/l\Z)^I$, we have $(f_1,i_1)(f_0,i_0)(f_1,i_1)^{-1}=(f,i)$. But this is clearly false.
\end{proof}

\section{Characteristic zero case}
First we shall give a proof of Theorem \ref{mainthm} when $\Char(k)=0$. Let $x$ be a regular parameter of $R$. Then $\hat R=k[[x]]$ and its fraction field $\hat K=\QF(\hat R)=k((x))$. It is well known (Newton's theorem) that the algebraic closure of $\hat K$ is $\Omega=k((x))[x^{1/n}; n\in \NN]$. We shall view $K^h$, $R^h$, etc. as subrings of $\Omega$.

In view of Lemma \ref{red.to.fin.extn}, it is enough to show that for every $n\in \NN$, there exists a finite extension $L/K$ with Galois closure $\tilde L$ such that $L\subset \hat K$ and $n$ divides $[\tilde L\hat K:\hat K]$.

Note that the polynomial $f(Y)=Y^{n+1}- Y^n-x$ is irreducible in $k[x][Y]$ and hence in $k(x)[Y]$. Let $C$ be the normalization of $\Aff^1_x$ in $k(x)[Y]/(f)$. There are two points lying above $x=0$ $P_0=(x=0,Y=0)$ and $P_1=(x=0,Y=1)$ in the plane curve $f(Y)=0$. Also note that this curve is regular at the two points $P_0$ and $P_1$. So these are the only two points in the normalization $C$ lying above $x=0$.

Note that $f(Y)\cong Y^n(Y-1) (\mod x)$. So by Hensel's lemma there exists $\alpha \in k[[x]]$ such that $f(\alpha)=0$. Let $L=k(x)(\alpha)$ and $\tilde L$ be the Galois closure of $L/k(x)$. Note that $L$ is the function field of $k(C)$. Since there are exactly two points above $x=0$ and $P_1$ is unramified for the cover $C\to \Aff^1_x$, the ramification index at the point $P_0$ is $n$. Hence the inertia group of $\tilde L$ above a point at $x=0$ is a multiple of $n$. This exactly means that $n$ divides $[\tilde L\hat K:\hat K]$.

\section{Positive characteristic case}
When the $\Char(k)=p\ne 0$ then the situation is a little more complicated. The algebraic closure of $k((x))$ is a complicated field. In fact, the absolute Galois group of $k((x))$ is a profinite group of the rank same as the cardinality of $k$.

Let $G$ be a finite group and $\Phi:X\to \PP^1_x$ be a $G$-Galois cover. We shall view $k(X)$ as a subfield of $\Omega=\overline{k((x))}$. Let $y_1,\ldots, y_r$ be points in $X$ lying above $x=0$. The inclusion of $k(X)$ into $\Omega$ corresponds to choosing a point of $X$ above $x=0$ (the center of the unique valuation of $k(X)k((x))$ on $X$). Let $y=y_1$ be this point. Let $I\le G$ be the inertia group of $\Phi$ at $y$. Let $I'\le I$ be the maximal subgroup of $I$ which is a normal subgroup of $G$.

\begin{lemma}\label{galoiscorr}
 If $I'=\{e\}$ then the Galois closure of $k(X)^I/k(x)$ is $k(X)$.
\end{lemma}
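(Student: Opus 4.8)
The statement relates the field-theoretic operation "Galois closure of $k(X)^I/k(x)$" to the group-theoretic data: $k(X)/k(x)$ is $G$-Galois, $k(X)^I$ corresponds to the subgroup $I$, and the Galois closure of an intermediate extension corresponds to the \emph{normal core} $\bigcap_{g\in G} gIg^{-1}$ of $I$ in $G$. So the whole lemma is essentially a translation exercise in Galois theory.

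First I would invoke the fundamental theorem of Galois theory for the finite Galois extension $k(X)/k(x)$ with group $G$: intermediate fields correspond to subgroups, and $k(X)^I$ is the subfield fixed by $I$. Next I would identify the Galois closure of $k(X)^I/k(x)$ inside $k(X)$ (note it must lie in $k(X)$, since $k(X)/k(x)$ is already Galois and contains $k(X)^I$). The Galois closure of $k(X)^I/k(x)$ is the smallest subfield $F$ with $k(X)^I \subseteq F \subseteq k(X)$ that is Galois over $k(x)$; under the correspondence this is the largest subgroup $N \le I$ that is normal in $G$. Concretely, $F = k(X)^N$ where $N = \bigcap_{\sigma \in G} \sigma I \sigma^{-1}$, because a subgroup $H \le I$ is normal in $G$ iff $k(X)^H/k(x)$ is Galois, and the compositum of all $\sigma(k(X)^I)$ for $\sigma \in G$ — which is the Galois closure — corresponds precisely to the intersection of all the conjugates $\sigma I \sigma^{-1}$.

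Then I would plug in the hypothesis. By definition $I'$ is the maximal subgroup of $I$ that is normal in $G$, i.e. $I' = \bigcap_{g \in G} gIg^{-1} = N$. The assumption $I' = \{e\}$ therefore forces $N = \{e\}$, so the Galois closure is $k(X)^{\{e\}} = k(X)$, which is exactly the assertion. One should also remark that $k(X)/k(x)$ being a finite separable (indeed Galois) extension coming from the cover $\Phi$ is what makes the Galois correspondence applicable.

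I do not anticipate a serious obstacle here: the only thing to be careful about is the direction of the inclusion-reversing correspondence (larger subgroup $\leftrightarrow$ smaller field), and the standard fact that the Galois closure of an intermediate extension corresponds to the normal core. If one wants to be fully self-contained, the mild verification to spell out is that $k(X)^H$ is Galois over $k(x)$ exactly when $H \trianglelefteq G$, and that the Galois closure, being the join of the conjugate subfields $\sigma(k(X)^I)$, is fixed by $\bigcap_\sigma \sigma I \sigma^{-1}$; both are routine.
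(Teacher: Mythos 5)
Your proposal is correct and follows exactly the paper's argument: the paper's proof is the one-line observation that, by the Galois correspondence, $k(X)^{I'}$ is the Galois closure of $k(X)^I/k(x)$, which is precisely the normal-core identification you spell out in more detail. Nothing further is needed.
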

\begin{proof}
 By definition of $I'$ and the Galois correspondence $k(X)^{I'}$ is the Galois closure of the field extension $k(X)^I/k(x)$.
\end{proof}

\begin{lemma}\label{splittingramification}
 The field $k(X)^I$ is a subfield of $k((x))$.
\end{lemma}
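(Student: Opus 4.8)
The claim is that $k(X)^I \subset k((x))$, where $I$ is the inertia group at the point $y = y_1$ of $X$ above $x=0$ that was used to embed $k(X)$ into $\Omega = \overline{k((x))}$. The plan is to identify $k(X)^I$ with a field on which the chosen valuation of $k(X)k((x))$ is totally unramified over $k((x))$, and then to use the fact that $k((x))$ (being complete with algebraically closed residue field $k$) has no nontrivial unramified extensions, so that any such extension must already be contained in $k((x))$.

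**Key steps.** First I would recall that the decomposition group $D \le G$ at $y$ is the stabilizer of the place of $k(X)$ determined by the valuation $v$ coming from the embedding $k(X) \hookrightarrow \Omega$; since $k$ is algebraically closed the residue extension is trivial, so $D = I$. Next, by the standard Galois correspondence for the extension $k(X)/k(x)$ together with the extension $k(X)k((x))/k((x))$ of complete (or henselian) fields, the fixed field $k(X)^D$ is precisely the maximal subextension of $k(X)/k(x)$ in which the place below $y$ does not split — equivalently, $k(X)^D \otimes_{k(x)} k((x))$ localized at the place induced by $y$ is a field, and in fact equals $k((x))$ itself, because $[k(X)^D : k(x)]$ equals the residue degree times the ramification index of that place, which is $1$ as $D=I$ is the full inertia. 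Since $I' = \{e\}$ is not needed here, I would drop it; the only ingredients are that the residue field is algebraically closed (forcing $D = I$) and that $k((x))$ is complete. Concretely: $k(X)^I$ embeds into the completion of $k(x)$ at $x=0$ compatibly with $v$, and the completion of $k(x)$ at $x = 0$ is exactly $k((x))$, so the image of $k(X)^I$ under $k(X) \hookrightarrow \Omega$ lies in $k((x))$.

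**Main obstacle.** The delicate point is bookkeeping the compatibility of the two Galois correspondences — the one for $k(X)/k(x)$ and the one for the local extension $k(X)k((x))/k((x))$ — so as to justify that taking $I$-invariants upstairs corresponds exactly to the subfield whose induced local field is $k((x))$. This requires knowing that $k(X) \otimes_{k(x)} k((x))$, localized at the maximal ideal corresponding to $y$, is the completion of $k(X)$ at $y$, and that its Galois group over $k((x))$ is $I$ (the decomposition $=$ inertia group), which is where completeness and the algebraically closed residue field enter. Once that identification is in place, the containment $k(X)^I \subset k((x))$ follows immediately, since the local field of $k(X)^I$ at the relevant place sits inside $k(X)k((x))$ with trivial ramification and trivial residue extension over $k((x))$, hence coincides with $k((x))$, and $k(X)^I$ maps into it via the fixed embedding.
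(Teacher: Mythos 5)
Your argument is correct and follows essentially the same route as the paper: identify $\Gal(k(X)k((x))/k((x)))$ with the decomposition group at $y$, note that it equals the inertia group $I$ because $k$ is algebraically closed, and conclude that the fixed field $k(X)^I$ lies in $k((x))$. The paper simply outsources the identification of this Galois group with the inertia group to a citation of Bourbaki, whereas you spell out the decomposition-equals-inertia step; the content is the same.
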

\begin{proof}
 By \cite[Corollary 4, Section 8.5, Chapter 6]{Bourbaki} $\Gal(k(X)k((x))/k((x)))$ is the inertia group of $\Phi$ at $y$. Hence $I=\Gal(k(X)k((x))/k((x)))$. Hence the fixed field $k(X)^I$ is contained in $k((x))$.
\end{proof}

\begin{proof}[Proof of Theorem \ref{mainthm} in positive characteristic case]
 Let $\hat L/k((x))$ be a finite Galois extension with Galois group $I$. Let $X\to \PP^1_x$ be the Harbater-Katz-Gabber cover associated to the extension $\hat L/k((x))$. Note that $X\to \PP^1_x$ is an $I$-cover totally ramified at $x=0$ and $x=\infty$ with inertia group $I$ at $x=0$ and tamely ramified at $x=\infty$. Let $l\ge 1$ be coprime to $|I|$. By \cite[Prop 3.3]{harbater-adding.branch.points} there exists a cover $Y\to X$ such that the composition $Y\to X\to \PP^1_x$ is a $\Z/l\Z\wr I$-Galois cover and the inertia subgroup $I_1$ of $Y\to \PP^1_x$ at some point above $x=0$ is isomorphic to $I$.
 Now in view of Lemma \ref{groupconstruction}, Lemma \ref{galoiscorr} and Lemma \ref{splittingramification} the subfield $L=k(Y)^{I_1}$ of $k(Y)$ is contained in $k((x))$ and the Galois closure of $L/k(x)$ is $k(Y)$. Also $k(Y)k((x))\supset k(X)k((x))=\hat L$. This is all one is required to show in view of Lemma \ref{red.to.fin.extn}.
\end{proof}

\section{Arithmetic case} \label{arithmetic-case}
Now we deal with the number field case. In this case, the residue field is not separably closed so strict henselization is used instead of henselization.

\begin{thm}
Let $K$ be a number field, $\cO$ the ring of integers, $p$ a prime of $\cO$ and $R=\cO_p$ the local ring at $p$. Let $R^h$ be the strict henselization of $R$ and $K^h$ be the field of fractions of $R^h$. Then the Galois closure of the algebraic extension $K^h/K$ is algebraically closed.  
\end{thm}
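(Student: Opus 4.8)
The plan is to mimic the function-field argument, using Lemma~\ref{red.to.fin.extn} as the organizing principle: it suffices to produce, for every finite separable (hence automatically separable) extension $L_1/K^h$ inside an algebraic closure, a finite extension $L/K$ with Galois closure $\tilde L$ such that $L\subseteq K^h$ and $\tilde L K^h\supseteq L_1$. Since $K^h$ is the maximal subextension of $\bar K_p$ (the algebraic closure of the completion $K_p$) that is unramified with trivial residue extension over $R$ — equivalently, $\Gal(\bar K_p/K^h)$ is the inertia subgroup of $\Gal(\bar K_p/K_p)$ — a finite extension $L_1/K^h$ corresponds to a finite extension of $K_p$ whose ramification we must capture by a global cover. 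So the real task, exactly as before, is: given any finite Galois extension $\hat L/K_p$ with group $I$, find a finite extension $L/K$ with $L\subseteq K^h$ whose Galois closure $\tilde L$ satisfies $\tilde L K_p\supseteq \hat L$.

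The key steps I would carry out are as follows. First, reduce to the statement in the previous sentence via Lemma~\ref{red.to.fin.extn} with $M=K^h$; note $K^a$, the separable (= algebraic) closure of $K$ in $K^h$, is what we are computing the Galois closure of. Second, replace the geometric Harbater--Katz--Gabber input by its arithmetic surrogate promised in section~\ref{HKG}: a global extension $E/K$ unramified outside $p$ (and possibly one auxiliary prime) whose decomposition/inertia group at $p$ realizes $I$ with the completion $E_{\mathfrak p}$ equal to $\hat L$; this is where one invokes the arithmetic analogue being developed. Third, repeat the wreath-product trick: pick $l$ coprime to $|I|$, adjoin a $\Z/l\Z\wr I$-structure so that the inertia group $I_1$ above $p$ in the new cover is merely \emph{isomorphic} to $I$ rather than equal to it, whence by Lemma~\ref{groupconstruction} the largest normal-in-$G$ subgroup of $I_1$ is trivial. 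Fourth, apply the arithmetic analogues of Lemma~\ref{galoiscorr} and Lemma~\ref{splittingramification}: the fixed field $L=E'^{\,I_1}$ sits inside $K^h$ (because $I_1$ is exactly the inertia group and $K^h$ is the inertia field) and its Galois closure over $K$ is all of $E'$, which contains $\hat L$ after base change to $K_p$. Conclude by Lemma~\ref{red.to.fin.extn} that the Galois closure of $K^h/K$ is the algebraic closure.

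The main obstacle is the arithmetic HKG-type input in Step~2: over $\PP^1_k$ one has a clean existence theorem for covers totally ramified over two points, but over $\spec\cO_K$ there is no such rigid statement, and producing a global extension whose completion at $p$ is a \emph{prescribed} local extension $\hat L/K_p$ — with controlled (ideally trivial, or at least harmless) ramification elsewhere — is genuinely delicate. One plausible route is to use the Grunwald--Wang theorem to realize prescribed local behavior when $I$ is solvable, handling the Wang exceptional cases with care; for general $I$ one may need to allow ramification at an additional auxiliary prime $\mathfrak q$ (the analogue of the tame branch point $x=0$) and argue that such extra ramification does not affect the conclusion, since the relevant statement only requires $\tilde L K_p\supseteq \hat L$, not equality, and the extra prime's inertia can be absorbed. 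A subtlety worth flagging is that the residue field of $R$ is finite rather than separably closed, so ``strict'' henselization is essential and one must track that the residue extension at $p$ in the global cover is nontrivial in the right way — but this only helps, since passing to $R^h$ already trivializes the residue extension, leaving pure ramification to be matched.

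Finally, a remark on the characteristic-zero flavor of the argument: because $K$ has characteristic $0$, one does not need the full force of HKG covers — one could instead attempt the elementary trisectrix-style construction used in the geometric char~$0$ case, writing down an explicit polynomial such as $f(Y)=Y^{n+1}-Y^n-\pi$ for a uniformizer $\pi$ at $p$ and arguing that the normalization of $\spec\cO_{K,p}$ (or of a global affine model) in $k(C)$ has exactly two points over $p$, one of which is unramified, forcing ramification index $n$ at the other. The obstruction here is purely arithmetic bookkeeping: ensuring irreducibility over $K$, that the two candidate points genuinely lie over $p$ and nowhere else pathologically, and that the resulting extension has the right local behavior — but if that works it yields a self-contained proof avoiding section~\ref{HKG} entirely, and I would present it as the primary argument with the HKG-analogue route as an alternative.
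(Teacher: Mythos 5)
Your skeleton---reduce via Lemma~\ref{red.to.fin.extn}, pass to a wreath product $\Z/l\Z\wr(\cdot)$, invoke Lemma~\ref{groupconstruction}, and take the fixed field of the inertia subgroup---matches the paper's strategy. But the crucial existence step is a genuine gap, and the fix you propose as your ``primary argument'' would fail. The paper does \emph{not} need an arithmetic Harbater--Katz--Gabber analogue (section~\ref{HKG} is a separate, partly conditional exploration, not an input to this theorem). Instead it observes that one only needs $\tilde L M^h\supseteq L_1$, not a global $I$-Galois extension realizing $\hat L/K_p$ on the nose: by elementary local field theory (Krasner/weak approximation) there is a global extension $L_0/K$ of the same degree with $L_0M=L_1$; one then passes to its Galois closure $\tilde L_0$ with group $H$ (possibly much larger than $I$ --- this is why the wreath product is taken over $H$, not over $I$ as you propose), and solves the split embedding problem with kernel $(\Z/l\Z)^H$ by Shafarevich's theorem, using the refinement in \cite[Claim 2.2.5]{topicsinGT} to keep $\tilde L/\tilde L_0$ unramified above $p$ so that the new inertia group is still $J=\Gal(L_0M^h/M^h)$ and $L=\tilde L^J$ lands inside $M^h$. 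Your route through section~\ref{HKG} or Grunwald--Wang is at best circular or restricted to abelian/special solvable cases, and you correctly flag that you cannot complete it.

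The more serious error is your closing claim that, since $\Char(K)=0$, the elementary polynomial construction $f(Y)=Y^{n+1}-Y^n-\pi$ could replace all of this. In the geometric characteristic-zero case that argument suffices only because $\Gal(\overline{k((x))}/k((x)))\cong\hat\Z$ by Newton--Puiseux, so realizing every ramification index $n$ exhausts all finite extensions of $k((x))$. For a number field the completion $K_p$ has residue characteristic $p>0$, its absolute Galois group is far from procyclic, and wild ramification produces finite extensions $L_1/M$ that are not contained in any tamely/totally ramified cyclic tower; capturing each degree $n$ of tame ramification leaves these untouched. So the ``self-contained proof avoiding section~\ref{HKG} entirely'' does not exist along those lines, and the Shafarevich embedding-problem step (or something of comparable strength) is genuinely necessary.
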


\begin{proof}
 As in the previous cases we use Lemma \ref{red.to.fin.extn} to reduce the question to a statement on finite extensions of $K$. Let $M$ be the the local field obtained by completion at $p$, i.e. $M=\QF(\hat R)$. Let $M^h$ be the fraction field of the maximal unramified extension of $\hat R$. Note that $K^h$ is the algebraic closure of $K$ in $M^h$. Let $L_1/M$ be a finite extension, it is enough to show that there exists a finite extension $L/K$ such that $L\subset M^h$ and $\tilde LM^h$ contains $L_1$ where $\tilde L$ is the Galois closure of $L/K$.
 
 By passing to the Galois closure of $L_1/M$ we may assume $L_1/M$ is a Galois extension with Galois group $I$. By local field theory there exists an extension $L_0/K$ of the same degree as $[L_1:M]$ such that $L_0M=L_1$. Let $\tilde L_0$ be the Galois closure of $L_0/K$ and $H=\Gal(\tilde L_0/K)$ . Note that $\tilde L_0M \supset L_1$ and let $J$ be the inertia subgroup of the $H$-extension $\tilde L_0/K$ at a prime $q_0$ lying above $p$. So $J=\Gal(\tilde L_0M/M)=\Gal(L_0M^h/M^h)$.
 
 Let $l\ge 1$ be coprime to $|H|$ and $G$ be the wreath product $\Z/l\Z \wr H=(\Z/l\Z)^H\rtimes H$. By Lemma \ref{groupconstruction} we know that $\cap_{g\in G} gHg^{-1}=\{e\}$. Hence for the subgroup $J\le H\le G$ we have $\cap_{g\in G} gJg^{-1}=\{e\}$. By a result of Shafarevich (\cite{sha}), there exists a $G$-Galois extension $\tilde L/K$ dominating $\tilde L_0/K$. Moreover its proof in \cite[Claim 2.2.5]{topicsinGT}, shows that the extension $\tilde L/\tilde L_0$ can be arranged to be unramified over all primes of $\tilde L_0$ lying above $p$. It follows that $\tilde LM^h\supset L_1M^h$ and the inertia subgroup of $\tilde L/K$ at some prime $q$ lying above the prime $q_0$ in $\tilde L_0$ is the subgroup $\Gal(\tilde L_0M^h/M^h)=J$. 
 
  \[ \xymatrix{
         & \tilde LM^h \\	
  M^h\ar[ur] &          &      &\tilde L,\tilde q \ar[ull] \\
         &  \tilde L_0M\ar[uu] & L,q\ar@{-->}[ull] \ar@{-->}[ru]^J \\
M\ar[uu]\ar[ur] &  L_1\ar[u] & \tilde L_0, q_0\ar[ul] \ar[u]\ar[uur]_{(\Z/l\Z)^{|H|}}\\
M\ar@{=}[u]\ar[ur]_I&     & L_0\ar[ul]\ar[u]\\       
    &  K,p\ar[ur]_{|I|}\ar[ul]\ar[uur]_H \ar@{-->}[uuur]
    }
\]

 Let $L={(\tilde L)}^J$. Then $L/K$ is unramified at $q$. Hence $L\subset M^h$. Since $\cap_{g\in G} gJg^{-1}=\{e\}$, the Galois closure of $L/K$ is $\tilde L$ by Lemma \ref{groupconstruction}. This completes the proof because $\tilde LM^h\supset L_1$.
\end{proof}

\section{Local to global in number field case}\label{HKG}

Let $K$ be a number field, $v$ be a finite place of $K$ and $K_v$ be the completion of $K$ at $v$. Let $\hat L/K_v$ be a Galois extension with Galois group $G$. By ramification theory, we know that $G$ is a solvable group. The question which we want to investigate in this section is the following. Does there exist a $G$-Galois extension $L/K$ branched only at $v$ and $v$ is non-split in $L$ such that the completion of $L$ at its unique place $w$ lying above $v$ is $\hat L$? This is an analogue of Harbater-Katz-Gabber-cover in the number field case. The first thing to notice is that we must assume $\hat L/K_v$ is totally ramified. This is because $K_v$ has an infinite maximal unramified extension and an extension $L/K$ corresponding to an unramified extension of $K_v$ will be unramified. But Hilbert class field of $K$ is a finite extension of $K$.

When the group $G$ is abelian, it follows from class field theory (see \cite[Lemma 2.1.6]{topicsinGT}) that this question has an affirmative answer.

\begin{pro}\label{basecase}
 Let $G$ be an abelian group and $\hat L/K_v$ be a totally ramified $G$-Galois extension then there exists an extension $L/K$ branched only at $v$ such that $v$ lifts to a unique place of $w$ of $L$ and $L_w=\hat L$.
\end{pro}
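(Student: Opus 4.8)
The plan is to produce $L$ directly by class field theory as a ray class field. First I would record the obstruction just mentioned in the text: since $K_v$ has infinite maximal unramified extension while the Hilbert class field of $K$ is finite, one cannot allow $\hat L/K_v$ to be anything but totally ramified, so this hypothesis is built into the statement. Granting it, the strategy is to find an abelian extension $L/K$ whose conductor is supported only at $v$ and whose completion at $v$ realizes $\hat L$. Concretely, by local class field theory the totally ramified $G$-Galois extension $\hat L/K_v$ corresponds to a surjection $\cO_{K_v}^{\times}\onto G$ (equivalently, to a surjection from the unit group, since total ramification means the uniformizer part is irrelevant to the inertia image). So the first step is to fix a positive integer $m$ such that this character factors through $(\cO_{K_v}/v^{m})^{\times}$.

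Next I would invoke the exact sequence describing the ray class group $\mathrm{Cl}_{v^{m}}(K)$ of modulus $v^{m}$: there is an exact sequence
\[
\cO_K^{\times} \longrightarrow (\cO_{K}/v^{m})^{\times} \longrightarrow \mathrm{Cl}_{v^{m}}(K) \longrightarrow \mathrm{Cl}(K) \longrightarrow 0,
\]
so the cokernel of $\cO_K^{\times}\to(\cO_K/v^{m})^{\times}$ injects into $\mathrm{Cl}_{v^{m}}(K)$. The key step is then: the local surjection $(\cO_{K}/v^{m})^{\times}\onto G$ need not descend to $\mathrm{Cl}_{v^{m}}(K)$ on the nose, but after enlarging $m$ and possibly composing with the natural quotient map, one can arrange for a character of $\mathrm{Cl}_{v^{m}}(K)$ whose restriction to the local unit part is the prescribed surjection onto $G$. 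This is exactly the content of \cite[Lemma 2.1.6]{topicsinGT} cited in the text, so I would simply appeal to it (or reprove the small piece: the image of $\cO_K^{\times}$ in $(\cO_K/v^m)^\times$ is finite, hence contained in some subgroup on which we may kill the character by passing to a multiple of $m$, making the character well-defined on the ray class group). Let $L/K$ be the abelian extension cut out by this character via the Artin map.

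Finally I would verify the three conclusions. Since the conductor of $L/K$ divides $v^{m}$, the extension is unramified at every place $\neq v$, i.e.\ branched only at $v$. The decomposition behavior at $v$ is governed by the image of the local unit group under the Artin map, which by construction is all of $G$; in particular the inertia group at $v$ is all of $\Gal(L/K)$, so $v$ is totally ramified in $L$, hence non-split with a unique place $w$ above it, and $\Gal(L_w/K_v)\cong \Gal(L/K)$ realizes $\hat L/K_v$ — one checks the completion is $\hat L$ and not merely an abstractly isomorphic extension by noting that both correspond to the same character of $\cO_{K_v}^\times$ under local class field theory. I expect the only subtle point to be the descent of the local character to a global ray class character — ensuring that enlarging the modulus does not disturb the prescribed local behavior — but this is routine once one writes down the ray class sequence above, and in any case it is precisely what \cite[Lemma 2.1.6]{topicsinGT} supplies.
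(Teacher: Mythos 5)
Your overall route is the same as the paper's: both proofs use local class field theory to turn $\hat L/K_v$ into a character with prescribed local behavior, declare that character trivial at every other place, and then invoke the global existence statement (Serre, Lemma 2.1.6, i.e.\ ``global class field theory'') to produce $L$. Your ray-class-group formulation and the paper's idelic/local-Artin-map formulation of that step are equivalent, so as long as you lean on the citation your argument matches the paper's.

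Two cautions, however. First, your parenthetical attempt to reprove the descent step is incorrect. The obstruction to descending $\chi\colon(\cO_K/v^m)^\times\onto G$ to a character of $\mathrm{Cl}_{v^m}(K)$ is that $\chi$ must vanish on the image of the global units $\cO_K^\times$, and this restriction is independent of $m$: a global unit reduces compatibly modulo $v^{m'}$ and $v^m$, so enlarging the modulus changes neither the subgroup being tested nor the values of $\chi$ on it. The finiteness of that image buys you nothing, and this is precisely where the real content of the cited lemma lives (it is the Grunwald--Wang-type issue coming from the closure of $\cO_K^\times$ in $\cO_{K_v}^\times$; over $\Q$ with $G$ of odd order it vanishes because the only units are $\pm 1$). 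So either cite the lemma and stop, or address the global units honestly. Second, your verification that $L_w$ equals $\hat L$ ``because both correspond to the same character of $\cO_{K_v}^\times$'' is insufficient: a totally ramified abelian extension of $K_v$ is determined by its norm subgroup of $K_v^*$, not by its character on units alone (for example $\Q_p(\sqrt{p})$ and $\Q_p(\sqrt{up})$, $u$ a non-residue, have the same unit character), so one must also control the image of a uniformizer. The paper's own proof is equally terse on this last point, but since you explicitly claim to check it, the check as written does not close the gap.
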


\begin{proof}
 By local class field theory there is a surjective map (Artin homomorphism) $\epsilon_v:K_v^*\to G$ with kernel $N_{\hat L/K_v}(\hat L^*)$. For other places $u$ of $K$ define $\epsilon_u: K_u^*\to G$ to be trivial. By global class field theory, there exists a $G$-Galois extension $L/K$ whose local Artin homomorphisms agree with $\epsilon_u$ on the inertia subgroup at $u$ for a place $u$ of $K$. Since $\epsilon_u$ is trivial for all places $u$ of $K$ different from $v$, $L/K$ is branched only at $v$. Note that $\hat L$ defines a place $w$ of $L$ lying above $v$. Since $\Gal(L/K)=\Gal(L_w/K_v)$, we obtain that $v$ is non-split in $L$. 
\end{proof}

The above result can be extended for arbitrary $|G|$ in certain situations.

\begin{pro}
  Let $G$ be a group and $\hat L/K_v$ be a totally ramified $G$-Galois extension and $G=G_{-1} = G_0\ge G_1\ge G_2\ldots G_n=\{e\}$ be the ramification filtration on $G$ then for $1\le i\le n$ there exist extensions $L_i/K$ branched only at $v$ such that $v$ lifts to a unique place of $w_i$ of $L_i$, $L_i\subset L_{i+1}$ is a $G_i/G_{i+1}$-Galois extension and $L_{w_i}=\hat L^{G_i}$.
\end{pro}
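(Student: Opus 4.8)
The plan is to build the tower $K=L_0\subset L_1\subset\cdots\subset L_n$ one layer at a time, at each stage applying Proposition~\ref{basecase} with the base number field taken to be $L_i$ rather than $K$. The point that makes this possible is that every successive quotient in the ramification filtration is abelian: $G_0/G_1$ is cyclic of order prime to $p$ (here $p$ is the residue characteristic at $v$) and $G_i/G_{i+1}$ is an elementary abelian $p$-group for $i\ge 1$; moreover each $G_{i+1}$ is normal in $G_0$, hence in $G_i$, so these quotients make sense. Since $\hat L/K_v$ is totally ramified we have $G_0=G$, so $\hat L^{G_0}=K_v$; this is the base of the induction, with $L_0=K$, $w_0=v$ and $(L_0)_{w_0}=K_v=\hat L^{G_0}$.

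For the inductive step assume $L_i/K$ has been produced with the asserted properties, so that $L_i/K$ is branched only at $v$, the place $v$ lifts to a unique place $w_i$ of $L_i$, and the completion $(L_i)_{w_i}$ is identified, as an extension of $K_v$, with $\hat L^{G_i}$. First I would note that the local extension $\hat L^{G_i}\subset\hat L^{G_{i+1}}$ is abelian with group $G_i/G_{i+1}$ and is totally ramified: it is a subextension of $\hat L/\hat L^{G_i}$, which is totally ramified because $\hat L/K_v$ is (the residue degrees in the two factors $\hat L^{G_i}/K_v$ and $\hat L/\hat L^{G_i}$ are each $1$). Applying Proposition~\ref{basecase} to the number field $L_i$, its place $w_i$, and this totally ramified abelian $(G_i/G_{i+1})$-extension of $(L_i)_{w_i}$ produces a $(G_i/G_{i+1})$-Galois extension $L_{i+1}/L_i$, branched only at $w_i$, in which $w_i$ lifts to a unique place $w_{i+1}$ and with $(L_{i+1})_{w_{i+1}}=\hat L^{G_{i+1}}$; the last identity is an equality of extensions of $\hat L^{G_i}$, so the induction carries forward.

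It remains to verify that $L_{i+1}$ has the required properties relative to $K$. The place $v$ lifts to a unique place of $L_{i+1}$ by composing the two uniqueness statements ($v$ lifts uniquely to $w_i$ in $L_i$, and $w_i$ lifts uniquely to $w_{i+1}$ in $L_{i+1}$). The extension $L_{i+1}/K$ is branched only at $v$: for a place $u\ne v$ of $K$, $L_i/K$ is unramified above $u$ and $L_{i+1}/L_i$ is unramified above every place of $L_i$ over $u$ (all such places are distinct from $w_i$), so the ramification indices multiply to $1$. Finally $L_i\subset L_{i+1}$ is $(G_i/G_{i+1})$-Galois and $(L_{i+1})_{w_{i+1}}=\hat L^{G_{i+1}}$ by construction. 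This closes the induction and yields $L_1,\ldots,L_n$ with the stated properties; in particular $(L_n)_{w_n}=\hat L^{\{e\}}=\hat L$.

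The only step that is not pure bookkeeping is checking, at each stage, that Proposition~\ref{basecase} really applies — that the intermediate local extension stays totally ramified and abelian — and this is the part I would regard as the main (though modest) obstacle; everything else is just tracking compositions of extensions unramified outside a single place. It is worth stressing that this construction does \emph{not} in general make $L_n/K$ a $G$-Galois extension, so the full Harbater--Katz--Gabber analogue is not obtained this way.
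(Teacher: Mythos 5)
Your proposal is correct and follows essentially the same route as the paper: an induction up the ramification filtration, applying Proposition \ref{basecase} at each stage to the abelian quotient $G_i/G_{i+1}$ over the base field $L_i$ at the place $w_i$, then composing the uniqueness and branching statements. The only addition is your explicit check that $\hat L^{G_i}\subset\hat L^{G_{i+1}}$ remains totally ramified, a hypothesis of Proposition \ref{basecase} that the paper uses without comment.
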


\begin{proof}
 First we note that $G$ is a solvable group. In the ramification filtration on $G$, $G=G_{-1} = G_0\ge G_1\ge G_2\ldots G_n=\{e\}$ is such that $G_0=G$ is the inertia group, $G_1$ is a $p$-group and $G_0/G_1$ is a cyclic prime-to-$p$ group where $p$ is the characteristic of the residue field. Also $G_i/G_{i+1}$ for $1\le i\le n-1$ are elementary abelian $p$-groups. 
 
 All fields are viewed as subfields of a fixed algebraic closure of $K_v$.
 We will construct extensions $L_i/K$ inductively for $G/G_i$-Galois subextensions $\hat L_i/K_v$ where $\hat L_i=L^{G_i}$ satisfying the conclusion of the theorem. For $i=1$, we can use Proposition \ref{basecase} to obtain $L_1$. Now assume that we have constructed the extension $L_i/K$ branched only at $v$, $w_i$ is the unique place of $L_i$ lying above $v$ and the completion of $L_i$ at $w_i$ is $\hat L_i$.
 
 Again applying Proposition \ref{basecase} to the $(G_i/G_{i+1})$-Galois extension $\hat L_{i+1}/\hat L_i$ we obtain a $(G_i/G_{i+1})$-Galois extension $L_{i+1}/L_i$ branched only at $w_i$ such that $w_i$ lifts to the unique place $w_{i+1}$ of $L_{i+1}$ and the completion of $L_{i+1}$ at $w_{i+1}$ is $\hat L_{i+1}$. So the place $v$ of $K$ lifts to the unique place $w_{i+1}$ of $L_{i+1}$ and $L_{i+1}/K$ is branched only at $v$.
\end{proof}

\begin{thm}
 Under the notations and assumptions of the above result, if we further assume that all the prime factors of the class number of $L$ is greater than $|G|$. Then $L/K$ is a Galois extension with Galois group $G$.
\end{thm}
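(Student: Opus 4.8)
The plan is to show that $L$ (that is, $L_n$) is Galois over $K$ by proving that the Galois closure $\tilde L$ of $L/K$ equals $L$. Write $w$ for the unique place of $L$ above $v$, so that $L_w=\hat L$ and $[L:K]=[\hat L:K_v]=|G|$. The first step is the local observation that passing to the Galois closure does not enlarge the completion at $v$: choosing a primitive element $\alpha$ for $L/K$ with minimal polynomial $f\in K[x]$, the equality $L\otimes_K K_v=\hat L$ shows $f$ is irreducible over $K_v$ and $\hat L=K_v(\alpha)$; since $\hat L/K_v$ is Galois, $f$ splits completely in $\hat L$, so the splitting field of $f$ over $K_v$ is already $\hat L$. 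As $\tilde L$ is generated over $K$ by the roots of $f$, the completion of $\tilde L$ at any place above $v$ is $\hat L=L_w$, hence $w$ splits completely in $\tilde L/L$. Since $L/K$ — and therefore its Galois closure $\tilde L/K$ — is branched only at $v$, it follows that $\tilde L/L$ is unramified at \emph{every} place of $L$.

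Next I would establish two properties of $H:=\Gal(\tilde L/L)$. Since $\Gal(\tilde L/K)$ acts faithfully and transitively on the $|G|$ conjugates of $L$, we get $|H|=|\Gal(\tilde L/K)|/|G|$, which divides $(|G|-1)!$; in particular every prime divisor of $|H|$ is strictly smaller than $|G|$. On the other hand $H$ is solvable: by construction $L$ sits in the tower $K\subset L_1\subset\cdots\subset L_n=L$ with abelian Galois layers $G/G_1,\,G_1/G_2,\,\dots,\,G_{n-1}/G_n$, so $L$ lies inside a finite solvable Galois extension of $K$; hence so does $\tilde L$, and therefore $\Gal(\tilde L/K)$, together with its subgroup $H$, is solvable.

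Combining these two facts with class field theory finishes the proof. Suppose $H\neq\{e\}$. Being a nontrivial finite solvable group, $H$ has nontrivial abelianization, so the maximal abelian subextension $M/L$ of $\tilde L/L$ is a nontrivial abelian extension; by the first step it is everywhere unramified, hence $M$ lies in the Hilbert class field of $L$ and $[M:L]$ divides the class number $h_L$. But $[M:L]$ divides $|H|$, so all of its prime factors are $<|G|$, contradicting the assumption that every prime factor of $h_L$ exceeds $|G|$. Therefore $H=\{e\}$, i.e.\ $\tilde L=L$ and $L/K$ is Galois. Finally, since $v$ is non-split in $L$ the decomposition group at $w$ is all of $\Gal(L/K)$, and it is canonically isomorphic to $\Gal(L_w/K_v)=\Gal(\hat L/K_v)=G$; thus $\Gal(L/K)\cong G$.

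I expect the solvability of $\Gal(\tilde L/K)$ to be the crux. Without it, a perfect group (such as $A_5$) could in principle arise as $\Gal(\tilde L/L)$, and then the class-number hypothesis — which only sees abelian unramified extensions — would be powerless. The resolution uses in an essential way that $L$ was built as an \emph{iterated abelian} extension, together with the standard closure property that the Galois closure of a tower of solvable Galois extensions is again solvable. A smaller point to handle carefully is the local computation of the first step, which relies on $\hat L/K_v$ being Galois (part of the hypotheses carried over from the previous proposition).
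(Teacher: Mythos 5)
Your proof is correct and shares the paper's central mechanism --- show that the Galois closure $\tilde L$ already lives inside $\hat L$ locally (because $\hat L/K_v$ is Galois and $[L:K]=[\hat L:K_v]$ forces the minimal polynomial of a primitive element to split in $\hat L$), deduce that $\tilde L/L$ is everywhere unramified of degree dividing a factorial, and kill it with the class-number hypothesis --- but it is organized differently. The paper inducts along the ramification filtration, proving each $L_i/K$ is Galois with group $G/G_i$; this is why it needs the preliminary paragraph propagating the class-number condition to all intermediate fields $M$ of $L/K$, and the induction hypothesis quietly buys something important: once $L_i/K$ is known to be Galois, $\tilde L_{i+1}$ is a compositum of abelian extensions of $L_i$, so $\Gal(\tilde L_{i+1}/L_{i+1})$ is abelian and the Hilbert class field applies verbatim. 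You instead run a single global argument for $L=L_n$, so your $H=\Gal(\tilde L/L)$ is not visibly abelian; you correctly pay for this by proving $H$ is solvable (from the iterated-abelian construction) and passing to its abelianization. This is a genuine contribution of your write-up: the paper's inductive step asserts ``unramified plus degree dividing $|G|!$ plus large class-number primes implies trivial'' without saying why a non-abelian unramified extension is excluded, and your solvability argument (or, alternatively, the abelianness supplied by the induction) is exactly the missing justification. Your version proves only that $L_n/K$ is Galois rather than every $L_i/K$, but that is all the statement claims, and you avoid the intermediate-field class-number transfer entirely. One caveat that both your argument and the paper's inherit: concluding that the maximal abelian unramified subextension has degree dividing $h_L$ requires unramifiedness at the archimedean places as well (otherwise one only gets the narrow class number, which may acquire extra factors of $2$, and $2\le|G|$); this deserves a remark or a hypothesis on the infinite places, but it is not a defect specific to your approach.
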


\begin{proof}
 Since $L/K$ is totally ramified at $v$, for any intermediate field $M$, $L/M$ is also totally ramified at the place of $M$ lying above $v$. Let $M'$ be the Hilbert class field of $M$. Then the extension $M'/M$ is unramified and hence $L$ and $M'$ are linearly disjoint over $M$. It follows that the field $LM'$ is an unramified extension of $L$ and $[LM':L]=[M':M]$. But the primes dividing class number of $L$ are greater than $|G|$. Hence so is primes dividing $[LM':L]$. So all primes dividing class number of $M$ is also greater than $|G|$.
 
 We already have that $L_1/K$ is a $G/G_1$-Galois extension. Assume that $L_i/K$ is a $G/G_i$-Galois extension, we shall show that $L_{i+1}/K$ is a Galois extension. 
 
 Let $\tilde L_{i+1}$ be the Galois closure of $L_{i+1}/K$. Note that $[\tilde L_{i+1}:K]$ is a factor of $|G|!$, since $[L_{i+1}:K]=|G|$. Let $\beta \in L_{i+1}$ be such that $L_{i+1}=K(\beta)$. Note that $\hat L_{i+1}=K_vL_{i+1}=K_v(\beta)$. Let $f(x)$ be the minimal polynomial of $\beta$ over $K$. Note that $\tilde L_{i+1}$ is the splitting field of $f(x)$. Since $[L_{i+1}:K]=[\hat L_{i+1}:K_v]$, $f(x)$ is also the minimal polynomial of $\beta$ over $K_v$. Also $\hat L_{i+1}/K_v$ is Galois, hence all the roots of $f(x)$ are in $\hat L_{i+1}$. So we have $\tilde L_{i+1} \subset \hat L_{i+1}$. Let $\tilde w_{i+1}$ be the place of $\tilde L_{i+1}$ corresponding to the DVR $\hat \cO_{L_{i+1}}\cap \tilde L_{i+1}$ and note that $\tilde w_{i+1}$ is lying above $w_{i+1}$. Also the decomposition subgroup at $\tilde w_{i+1}$ of $\tilde L_{i+1}/K$ is $\Gal(\hat L_{i+1}/K_v)$ is same as the decomposition group of $L_{i+1}/K$ at $w_{i+1}$. Hence $\tilde L_{i+1}/L_{i+1}$ is an unramified extension. But by the first paragraph, primes dividing class number of $L_{i+1}$ are greater than $|G|$. Where as $[\tilde L_{i+1}:L_{i+1}]$ divides $[\tilde L_{i+1}:K]$ which in turn is a factor of $|G|!$. So $L_{i+1}= \tilde L_{i+1}$ is a Galois extension of $K$. The Galois group is $G/G_{i+1}$ follows from the fact that $\hat L_{i+1}=L_{i+1}K_v$, $\Gal(\hat L_{i+1}/K_v)=G/G_{i+1}$ and $[L_{i+1}:K]=[\hat L_{i+1}:K_v]$. The result follows by induction.
\end{proof}

\begin{rmk}
These global extensions corresponding to local extensions are useful because they can be used to construct extensions of number fields with some control over even wild ramification. This can be achieved along the lines of \cite{wild.ram}. 
\end{rmk}

\end{document}